\DeclareMathOperator{\Tr}{Trace}
\DeclareMathOperator{\Frob}{Frob}
\DeclareMathOperator{\PGL}{PGL}
\DeclareMathOperator{\Aut}{Aut}
\DeclareMathOperator{\GL}{GL}
\DeclareMathOperator{\SL}{SL}
\DeclareMathOperator{\Gal}{Gal}
\DeclareMathOperator{\norm}{Norm}
\DeclareMathOperator{\Coker}{Coker}
\newcommand{\Q}{{\mathbb Q}}
\newcommand{\Z}{{\mathbb Z}}
\newcommand{\C}{{\mathbb C}}
\newcommand{\F}{{\mathbb F}}
\newcommand{\cA}{\mathcal{A}}
\newcommand{\cN}{\mathcal{N}}
\newcommand{\cO}{\mathcal{O}}
\newcommand{\ff}{\mathfrak{f}}
\newcommand{\fq}{\mathfrak{q}}
\newcommand{\rhobar}{\overline{\rho}}
\newtheorem{theorem}{Theorem}[section]
\newtheorem{conjecture}[theorem]{Conjecture}
\newtheorem{corollary}[theorem]{Corollary}
\newtheorem{remark}[theorem]{Remark}
\newtheorem{definition}[theorem]{Definition}
\newcommand{\hooklongrightarrow}{\lhook\joinrel\longrightarrow}
\begin{document}
	\title[Serre's Modularity and Fermat over quadratic imaginary fields ]{On Serre's modularity conjecture and Fermat's equation over quadratic imaginary fields of class number one}

\begin{abstract}
 In the present article, we extend previous results of the author and we show that when $K$ is any quadratic imaginary field of class number one, Fermat's equation $a^p+b^p+c^p=0$ does not have integral coprime solutions $a,b,c \in K \setminus \{ 0 \}$ such that $2 \mid abc$ and $p \geq 19$ is prime. The results are conjectural upon the veracity of a natural generalisation of Serre's modularity conjecture. 
\end{abstract}

\author{George C. \c Turca\c s}
\address{Mathematics Institute\\
	University of Warwick\\
	Coventry\\
	United Kingdom}

\email{george.turcas@warwick.ac.uk}

\subjclass[2010]{Primary 11D41, Secondary 11F03, 11F80, 11F75}
\keywords{Fermat, Bianchi, Galois representation, Serre modularity}

\maketitle

\section{Introduction}\label{Sec1}

Let $K$ be a number field and denote by $\cO_K$ its ring of integers. The Fermat equation with prime exponent $p$ over $K$ is the equation
\begin{equation} \label{eq1} a^p+b^p+c^p=0, \hspace{1cm} a, b, c \in \cO_K.\end{equation} Wiles's extraordinary proof of Fermat's Last Theorem inspired mathematicians to attack \eqref{eq1} via Frey elliptic curves and modularity over $K$. Successful attempts had been carried out by Jarvis and Meekin \cite{jarvis2004fermat}, and Freitas and Siksek \cite{FreitasSiksek}, \cite{AFreitasSiksek}. They all rely on progress in modularity lifting over totally real fields due to work of Barnett-Lamb, Breuil, Diamond, Gee, Geraghty, Kisin, Skinner,
Taylor, Wiles and others. Modularity of elliptic curves over real quadratic fields was proved by Freitas, Le Hung and Siksek \cite{Freitas2015}. Recently, Derickx, Najman and Siksek \cite{Der19} proved that elliptic curves over totally real cubic fields are modular.

On the other hand, modularity of elliptic curves over number fields with complex embeddings is highly conjectural. For general number fields, \c Seng\" un and Siksek \cite{ASiksek} proved an asymptotic version of Fermat's Last Theorem, under the assumption of two standard, but very deep conjectures in the Langlands programme.  They proved that for a number field $K$ satisfying a precise $S$-unit equation criterion, there exists an ineffective constant $B_K$, depending only on the field $K$, such that for all primes $p>B_K$, the equation
$$a^p + b^p + c^p = 0,$$
does not have solutions in $K \setminus \{0\}$.

 Let us denote by $G_K=\Gal(\overline K/K)$ the absolute Galois group of $K$. 
Our results assume a version of Serre's modularity conjecture (see Conjecture \ref{serreconj}) for odd, absolutely irreducible, continuous $2$-dimensional mod $p$ representations of $G_K$ that are finite flat at every prime above $p$. 

In \cite{turcas2018}, the author proved the following theorem
\begin{theorem} \label{main} Let $K = \Q(\sqrt{-d})$, where $d \in \{1,2,7\}$. Assume Conjecture \ref{serreconj} holds for $K$. If $p \geq 5$ is a rational prime number, then the equation	
	\begin{equation} \label{maineq} a^p + b^p + c^p = 0 \end{equation}
	has no solutions $a,b,c \in K\setminus \{ 0\}$.
\end{theorem}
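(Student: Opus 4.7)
Suppose for contradiction that \eqref{maineq} admits a solution $(a,b,c) \in (\OO_K \setminus \{0\})^3$ for some prime $p \geq 5$. Since $K$ has class number one, I may rescale so that $a,b,c$ are pairwise coprime in $\OO_K$. After permuting and multiplying by units to reach an admissible normal form (here the fact that for $d \in \{1,2,7\}$ the rational prime $2$ is non-inert in $K$ lets one arrange the $2$-adic valuations of $a,b,c$ to mimic the classical situation over $\Q$), I attach the Frey elliptic curve
\[
E = E_{a,b,c} : \ y^2 = x(x-a^p)(x+b^p)
\]
over $K$ and study $\bar\rho = \bar\rho_{E,p} \colon G_K \to \GL_2(\F_p)$, the Galois representation on $E[p]$.

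The core task is to verify the hypotheses of Conjecture~\ref{serreconj} for $\bar\rho$: absolute irreducibility, oddness, and finite-flatness at every prime $\fp \mid p$. A Tate-model computation shows that $E$ has multiplicative reduction at every prime $\fq$ of $K$ dividing $abc$ but lying above an odd rational prime, and at such $\fq$ one has $p \mid v_\fq(\Delta_E)$, so $\fq$ contributes nothing to the Serre conductor; a parallel local analysis at primes above $p$, using that the $j$-invariant is non-integral at primes of multiplicative reduction, yields finite-flatness at $p$. The upshot is $\gN(\bar\rho) \mid (2)_K$. For absolute irreducibility I would adapt the isogeny-character arguments of David and of Freitas--Siksek: the obstruction is a $K$-rational cyclic $p$-isogeny on $E$, which for the three specific fields can be ruled out from the known structure of $X_0(p)(K)$ combined with a class-group computation. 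Oddness in the generalised sense is automatic since $\det\bar\rho$ is the mod-$p$ cyclotomic character.

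Granted these hypotheses, Conjecture~\ref{serreconj} supplies a Bianchi cuspidal eigenform $f$ of parallel weight $2$ and level $\gN \mid (2)_K$ with trivial character such that $\bar\rho \cong \bar\rho_{f,p}$. For each $K \in \{\Q(i), \Q(\sqrt{-2}), \Q(\sqrt{-7})\}$, the ideal $(2)_K$ has very few divisors (all of norm at most $4$), and the corresponding spaces of Bianchi cuspforms can be computed directly by the methods of Cremona and his collaborators; one verifies that they vanish. This yields the desired contradiction.

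\emph{Main obstacle.} I expect the irreducibility of $\bar\rho$ to be the main technical hurdle: over an imaginary quadratic field one lacks an analogue of Mazur's theorem bounding degrees of isogenies, so the argument must leverage specific arithmetic of $K$ (class number one, the unit group, and small sets of rational points on modular curves $X_0(p)$). A secondary delicate point is the local analysis of $\bar\rho$ at primes above $2$: the splitting behaviour of $2$ differs across the three fields, so the admissible levels $\gN$ and their associated Bianchi cuspform spaces must be enumerated separately for each $K$, and the corresponding level-lowering step must be checked to land inside the reach of Conjecture~\ref{serreconj}.
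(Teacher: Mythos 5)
First, a point of orientation: the paper does not actually reprove Theorem \ref{main} --- it is quoted from \cite{turcas2018} --- but the same machinery is developed here for the six fields with $2$ inert, and measured against it your outline has the right skeleton (Frey curve, hypotheses of Conjecture \ref{serreconj}, comparison with a finite computed list of Bianchi eigenclasses) while missing the two steps that constitute the real content. The first is irreducibility. You propose to rule out a $K$-rational cyclic $p$-isogeny ``from the known structure of $X_0(p)(K)$ combined with a class-group computation,'' but this must be done uniformly for every prime $p\geq 5$, so no finite list of modular curves can be computed; you correctly flag this as the main obstacle but do not resolve it. The argument actually used hinges on the fact emphasised in the introduction: for $d\in\{1,2,7\}$ there is a prime $\fq\mid 2$ with residue field $\F_2$, so $\fq\mid abc$ automatically (otherwise $a^p+b^p+c^p\equiv 1\pmod{\fq}$), hence $E$ has potentially multiplicative reduction at $\fq$ with $v_{\fq}(j_E)<0$ and $p\nmid v_{\fq}(j_E)$. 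This puts an element of order $p$ in the image (so irreducible implies absolutely irreducible), and in the reducible case it forces the squares of the diagonal characters to be unramified at $\fq$; combined with class number one this makes them everywhere unramified, hence trivial, and then Kamienny's theorem on torsion over quadratic fields together with the congruence $\norm(\fq)^{\pm 1}\equiv 1\pmod p$ (with $\norm(\fq)=2$ or $4$) bounds $p$. None of this mechanism appears in your sketch.

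The second gap is the passage from the mod $p$ eigenform to complex Bianchi forms. Conjecture \ref{serreconj} produces a Hecke eigenclass in $H^{1}(Y_0(\gN),\overline{\F}_p)$, and over an imaginary quadratic field such a class need \emph{not} come from characteristic zero: the obstruction is $p$-torsion in $H^{2}(Y_0(\gN),\Z)$. Verifying that the space of complex cuspforms vanishes therefore proves nothing by itself; one must first show the absence of $p$-torsion (the paper does this by computing the torsion in $\Gamma_0(\gN)^{\mathrm{ab}}$ and invoking the Ash--Stevens lifting lemma), and then compare the lifted eigenvalue system against \emph{all} complex eigenclasses in $H^{1}(Y_0(\gN),\C)$, including Eisenstein ones, typically via congruences on Hecke eigenvalues at small primes rather than by mere vanishing of the cuspidal space. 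Relatedly, your assertion that the Serre conductor divides $(2)_K$ is unjustified: at a prime above $2$ of additive, potentially multiplicative reduction the conductor exponent routinely exceeds $1$ (in the present paper it is bounded by $4$ only after a careful scaling by units, cf.\ Table \ref{corepz}), so the finite list of levels $\gN$ to be examined is larger than you allow, and omitting levels would invalidate the final contradiction.
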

Using known results about solutions to the equations $a^3+b^3=c^3$ and $a^4+b^4=c^4$ over $\Q(i)$, one obtains the easy corollary (see \cite{turcas2018}*{Corollary 1.2})

\begin{corollary} \label{maincor} Assume Conjecture \ref{serreconj} holds for $\mathbb Q(i)$. Then, Fermat's Last Theorem holds over $\mathbb Q(i)$. In other words, for any integer $n \geq 3$, the equation
	$$a^n+b^n=c^n$$
	has no solution $a,b,c \in \mathbb Q(i) \setminus \{ 0\}$. 
\end{corollary}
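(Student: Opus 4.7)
The plan is to combine Theorem \ref{main} with classical descent results for the Fermat equations of small exponent. The reduction step is standard: every integer $n \geq 3$ is divisible either by $4$ or by an odd prime $p$, and a nontrivial solution to $a^n + b^n = c^n$ yields a nontrivial solution to the Fermat equation of exponent $q \in \{4\} \cup \{\text{odd primes}\}$ by raising $a,b,c$ to the appropriate $m$-th power. Hence it suffices to treat $n=3$, $n=4$, and $n=p$ an odd prime $\geq 5$.

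For $n=p\geq 5$ prime, I would apply Theorem \ref{main} with $d=1$, so that $K=\mathbb{Q}(i)$. The equation $a^p+b^p=c^p$ rewrites as $a^p+b^p+(-c)^p=0$ because $p$ is odd, and the theorem (conditional on Conjecture \ref{serreconj} for $\mathbb{Q}(i)$) rules out any solution with $a,b,c\in\mathbb{Q}(i)\setminus\{0\}$. This step is purely a citation and does not need any additional input.

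For $n=3$, I would invoke the unconditional computation of the Mordell--Weil group of the Fermat cubic $a^3+b^3=c^3$ over $\mathbb{Q}(i)$. This projective curve is $\mathbb{Q}$-isomorphic to the elliptic curve of conductor $27$ and $j$-invariant $0$, and one checks that its Mordell--Weil rank over $\mathbb{Q}(i)$ is still zero, since the quadratic twist by $-1$ also has rank zero. The $\mathbb{Q}(i)$-torsion subgroup then consists only of the points corresponding to trivial solutions $(1:-1:0)$, $(1:0:1)$, $(0:1:1)$ and their permutations. For $n=4$, I would invoke the classical fact, going back to a descent of Hilbert and Fueter over the Gaussian integers, that $a^4+b^4=c^4$ has no solution in $\mathbb{Q}(i)\setminus\{0\}$; the argument is a direct analogue of Fermat's infinite descent, carried out in the ring $\mathbb{Z}[i]$ using its unique factorisation and the units $\{\pm 1,\pm i\}$.

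The main potential obstacle is ensuring that the $n=3$ and $n=4$ cases really are available over $\mathbb{Q}(i)$ rather than just over $\mathbb{Q}$; the descent in the $n=4$ case requires checking, at each parity and coprimality step, that everything still works in $\mathbb{Z}[i]$, but this is well documented in the literature and is the reason the author calls the corollary \emph{easy}. Once $n=3$ and $n=4$ are in hand and Theorem \ref{main} disposes of all odd primes $p\geq 5$, the reduction in the first paragraph completes the proof.
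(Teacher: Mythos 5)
Your proposal is correct and follows essentially the same route as the paper, which derives the corollary by combining Theorem \ref{main} (for prime exponents $p \geq 5$) with the known resolutions of the exponent $3$ and exponent $4$ Fermat equations over $\mathbb{Q}(i)$, exactly as you describe. The reduction of general $n \geq 3$ to the cases $n \in \{3,4\} \cup \{p \geq 5 \text{ prime}\}$ and the citations you supply for the cubic and quartic cases over $\mathbb{Z}[i]$ are precisely what the paper (via \cite{turcas2018}*{Corollary 1.2}) relies on.
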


In the present work, we consider Fermat's equation over the remaining six quadratic imaginary fields of class number one. Our main result is as follows.

\begin{theorem}
	\label{2inert} Let $K= \Q(\sqrt{-d})$, where $d \in \{3,11,19,43, 67,163\}$. Assume Conjecture \ref{serreconj} holds for $K$. For any prime $p \geq 19$, the Fermat equation 
	$$a^{p}+b^{p}+c^p =0,$$
	does not have solutions in coprime $a, b, c \in \mathcal O_K \setminus \{ 0\}$ such that $2 \mid abc$.
\end{theorem}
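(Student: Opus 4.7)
\textbf{Proof plan for Theorem \ref{2inert}.}

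My plan is the standard Frey curve plus Serre-modularity approach of \cite{turcas2018}, adapted to the new feature that the rational prime $2$ is now inert in $K$ (indeed $d \equiv 3 \pmod 8$ for each $d \in \{3,11,19,43,67,163\}$). Assume for contradiction that $(a,b,c) \in (\cO_K \setminus \{0\})^3$ is coprime, satisfies $a^p+b^p+c^p=0$, and has $2 \mid abc$, with $p \geq 19$ prime. Let $\mathfrak{P} = (2)\cO_K$ be the unique prime above $2$, with residue field $\F_4$. After permuting the triple we may assume $\mathfrak{P} \mid b$. Attach the Frey elliptic curve
\[ E = E_{a,b,c} \colon Y^2 = X(X - a^p)(X + b^p), \]
whose discriminant is $\Delta_E = 16\,(abc)^{2p}$ and whose $j$-invariant has denominator divisible by $(abc)^{2p}$.

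At every prime $\mathfrak{q} \nmid 2$ of $K$ dividing $abc$, the curve $E$ has multiplicative reduction, contributing conductor exponent $1$. At primes above $p$ that do not divide $abc$, $E$ has good reduction, and at primes of multiplicative reduction lying above $p$ we have $p \mid v_\mathfrak{q}(\Delta_E)$, so $E[p]$ is finite flat there by Fontaine's theory. The delicate point is the conductor exponent $f_\mathfrak{P}(E)$ at the inert prime $\mathfrak{P}$: I would run Tate's algorithm on minimal Weierstrass models of $E$ over the unramified quadratic extension $K_\mathfrak{P}/\Q_2$, exploiting $v_\mathfrak{P}(b) \geq 1$, to obtain an explicit small upper bound $f_\mathfrak{P}(E) \leq N$ uniformly in the putative solution. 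Together with known results on $K$-rational $p$-isogenies over imaginary quadratic fields, the range $p \geq 19$ should then force the mod $p$ representation $\overline{\rho}_{E,p} \colon G_K \to \GL_2(\F_p)$ to be absolutely irreducible.

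Conjecture \ref{serreconj}, together with Ribet-style level lowering away from $p$ that kills the multiplicative-reduction primes coprime to $\mathfrak{P}$, yields a mod $p$ Bianchi eigenform $\mathfrak{f}$ of parallel weight $2$ and level $\mathfrak{P}^k$ with $k \leq N$ whose residual representation matches $\overline{\rho}_{E,p}$. To derive a contradiction I would, for each of the six fields, enumerate all Bianchi newforms of parallel weight $2$ and level dividing $\mathfrak{P}^N$ using existing databases and computer algebra. Forms not of elliptic type are discarded by comparing $a_\mathfrak{q}(\mathfrak{f})$ with the Frey-theoretic constraint $a_\mathfrak{q}(E) \equiv \pm(\mathrm{N}\mathfrak{q}+1) \pmod p$ for auxiliary primes $\mathfrak{q}$ of multiplicative reduction (or with the Hasse bound otherwise), which forces $p$ to divide a small nonzero integer and thus $p < 19$. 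Forms corresponding to elliptic curves $E'/K$ with conductor $\mathfrak{P}^k$ would be eliminated by enumerating such $E'$ (a finite list) and ruling them out individually via an image-of-inertia argument at $\mathfrak{P}$ or via torsion/isogeny comparison with $E$.

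The main obstacle I anticipate is the analysis of $E$ at the inert prime $\mathfrak{P}$: the split and ramified cases of \cite{turcas2018} could essentially borrow the classical $\Q_2$-level Tate analysis, but here the residue field is $\F_4$ and the minimal model computation must be done intrinsically in $K_\mathfrak{P}$, which changes the admissible reduction types and the set of eigenforms one must eliminate. A secondary difficulty is the field $K = \Q(\sqrt{-3})$, where the sixth roots of unity produce extra automorphisms of Frey curves and potential obstructions to residual irreducibility that will have to be handled separately; together with the level-lowering step, this is where I expect the lower bound $p \geq 19$ to be genuinely needed.
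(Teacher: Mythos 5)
Your overall architecture (Frey curve, Serre's conjecture over $K$, elimination of the finitely many eigenforms at the predicted level $\mathfrak q^4$) matches the paper's, and your final elimination step (full $2$-torsion, Hasse bounds, trace congruences at auxiliary primes of small norm) is essentially what the paper does. But there are two genuine gaps in the middle of your plan.

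First, the absolute irreducibility of $\overline{\rho}_{E,p}$ cannot be outsourced to ``known results on $K$-rational $p$-isogenies over imaginary quadratic fields'': no uniform isogeny theorem with a bound as low as $17$ or $19$ is available over these fields. The paper's Theorem \ref{IsIrred} is a bespoke argument whose engine is precisely the hypothesis $2\mid abc$: it forces potentially multiplicative reduction at $\mathfrak q=2\cO_K$, hence $v_{\mathfrak q}(j(E))=4-2pv_{\mathfrak q}(abc)<0$ with $p\nmid v_{\mathfrak q}(j(E))$, which produces an element of order $p$ in the image (so irreducible implies absolutely irreducible); in the reducible case the diagonal characters $\theta,\theta'$ are analysed using the class number one hypothesis, Kamienny's bound $p\le 13$ on prime torsion over quadratic fields, the fundamental characters at primes above $p$, and finally the identity $\norm(\mathfrak q)-1=3$, which is exactly where the inertness of $2$ (residue field $\F_4$) enters. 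Your proposal records $v_{\mathfrak P}(b)\ge 1$ but never connects it to irreducibility, and without that connection the step fails.

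Second, and more seriously, you pass directly from the mod $p$ eigenform supplied by Conjecture \ref{serreconj} to a comparison with complex Bianchi newforms in databases. Over an imaginary quadratic field this is not automatic: a Hecke eigenclass in $H^1(Y_0(\cN),\overline{\F}_p)$ lifts to characteristic zero only if $H^2(Y_0(\cN),\Z_{(p)})$ has no $p$-torsion, and the paper must verify this by computing the abelianizations $\Gamma_0(\mathfrak q^4)^{\mathrm{ab}}$ over each of the six fields (via \c Seng\" un's algorithm and Page's presentations of $\PGL_2(\cO_K)$) before invoking the Ash--Stevens lifting lemma. This computation is where the bound $p\ge 19$ actually comes from: $\Gamma_0(\mathfrak q^4)^{\mathrm{ab}}$ over $\Q(\sqrt{-163})$ has $17$-torsion; it has nothing to do with the sixth roots of unity in $\Q(\sqrt{-3})$ as you speculate. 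Finally, a minor point: no separate Ribet-style level lowering is needed, since the conjecture already predicts the level to be the Serre conductor, which is supported only at $\mathfrak q$ because $\overline{\rho}_{E,p}$ is unramified at the odd primes of multiplicative reduction; and the uniform bound $v_{\mathfrak q}(N_E)\le 4$ is obtained not by a bare application of Tate's algorithm but by first scaling the triple by a unit, using the explicit cokernel computation of Table \ref{corepz}.
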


\begin{remark}
	When $d \neq 163$, the statement of the previous theorem is true for $p \geq 17$. We only have to take  $p \geq 19$ for $K=\mathbb Q(\sqrt{-163})$ due to the presence of $17$-torsion in the integral cohomology of the relevant locally symmetric space.
\end{remark}

For an overview of the difficulties that had to be overcome to prove Theorems \ref{main} and \ref{2inert}, we refer to \cite{turcas2018}*{Section 1}. To prove Theorem \ref{main}, we made essential use of the fact that there is a prime ideal of $\mathcal O_K$ of residue field $\F_2$. This turns out to be an essential ingredient for proving that certain mod $p$ Galois representations are absolutely irreducible, a hypothesis of Conjecture \ref{serreconj}. Indeed, the rational prime $2$ ramifies in $\Q(i)$ and $\Q(\sqrt{-2})$ and splits completely in $\Q(\sqrt{-7})$. The remaining quadratic imaginary fields of class number one are $\Q(\sqrt{-3})$, $\Q(\sqrt{-11})$, $\Q(\sqrt{-19})$, $\Q(\sqrt{-43})$, $\Q(\sqrt{-67})$ and $\Q(\sqrt{-163})$. The prime $2$ is inert in all six of them. In order to prove that certain mod $p$ Galois representations are absolutely irreducible, we need to add the assumption that $2 \mid abc$.

 It would have been extremely satisfying to obtain a statement of Theorem \ref{2inert} analogous to Theorem \ref{main}, showing that for $p$ larger than a fixed constant depending only on $K$, the equation \eqref{maineq} has no solutions $a,b,c \in K \setminus \{ 0\}$ for all the six fields $K$. Such a statement is not even true when $K=\Q(\sqrt{-3})$. One can see this by considering the triple $(a,b,c)=(1, \omega, \omega^2)$, where $\omega \in \Q(\sqrt{-3})$ is a primitive third root of unity, a solution to \eqref{maineq} for every prime exponent $p \geq 5$. However, in Section 5 we show that another folklore conjecture due to Serre, which concerns the surjectivity of mod $p$ Galois representations, together with Conjecture \ref{serreconj} imply that when $K$ is imaginary quadratic of class number one, there is a constant $B_K$, depending only on $K$, such that for every prime $p > B_K$ if $(a,b,c) \in K \setminus \{0 \}$ is a solution to \eqref{main}, then $K = \Q(\sqrt{-3})$ and $(a,b,c)=(1, \omega, \omega^2)$, up to permutation.

\subsection*{Acknowledgements} We are indebted to the referee for his careful reading of this paper and for suggesting numerous improvements. The author is grateful to his advisor Samir Siksek for suggesting the problem and for his great support. It is also a pleasure to thank John Cremona, Toby Gee, Aurel Page and Haluk \c Seng\" un for useful discussions. The author is supported by EPSRC Programme Grant EP/K034383/1 LMF: L-Functions and Modular Forms.

\section{Serre's modularity conjecture}

Throughout this section $K$ is a quadratic imaginary field of class number one and $p \geq 5$ a rational prime that does not ramify in $K$. For notations, relevant definitions and a detailed discussion concerning complex and mod $p$ eigenforms for $\GL_2$ over $K$, we refer the reader to \cite{turcas2018}*{Section 2}. 

Let $\cN \subseteq \cO_K$ be an ideal and consider the locally symmetric space
$$Y_0(\cN) = \Gamma_0(\cN) \backslash \mathcal H_3,$$
where $\Gamma_0(\cN)= \left\{  \left( \begin{array}{cc} a & b \\ c & d \end{array} \right) \in \GL_2(\mathcal O_K) \mid c \in \cN\right\}$ is the usual congruence subgroup for the modular group $\GL_2(\mathcal O_K)$ and $\mathcal H_3$ is the hyperbolic $3$-space. It is explained in detail in loc. cit. that, in our setting, we can use the following definition.

\begin{definition} By a complex (or mod $p$) eigenform of level $\cN$ we understand a 
	cohomology class $c \in H^1(Y_0(\mathcal N), \C)$ (or $H^1(Y_0(\mathcal N), \overline \F_p)$) that is a simultaneous eigenvector for all the Hecke operators $T_{\pi}$, where $(\pi) \subset O_K$ is a prime ideal coprime to $\cN$ (or $p\cN$ respectively).
\end{definition}

The eigenforms in these algebras are sometimes called Bianchi modular forms (or mod $p$ Bianchi modular forms) in the literature. Via the Eichler-Shimura-Harder isomorphism \cite{Harder87}*{Section 3.1}, Bianchi modular forms can be analytically interpreted as vector valued real-analytic functions on the hyperbolic 3-space (see \cite{crewhit} for more details). Unlike the classical situation in which $K=\Q$, when $K$ is quadratic imaginary not all the mod $p$ eigenforms lift to complex ones. The obstruction for such lifting is the possible presence of $p$-torsion in the second integral cohomology $H^2(Y_0(\cN), \Z_{(p)})$, where $\Z_{(p)}$ is the set of rational numbers with denominators prime to $p$. In the proof of Theorem \ref{2inert}, we prove that for the prime numbers $p$ under consideration, the aforementioned cohomology group does not have $p$-torsion. Appealing to a result of Ash and Stevens \cite{ash1986}, we show that mod $p$ eigenforms lift to complex ones.

We will be using a special case of Serre's modularity conjecture over number fields. In his landmark paper \cite{Serre}, Serre  conjectured that all absolutely irreducible, odd mod $p$ Galois representation of $G_{\mathbb Q}=\Gal(\overline{\mathbb Q}/ \mathbb Q)$ \textit{arise} from a cuspidal eigenform $f$. Here $f$ is a classical cuspidal modular form. In the same article, Serre gave a recipe for the level $N$ and the weight $k$ of the sought after eigenform. This conjecture was proved by Khare and Wintenberger \cite{Khare2009}.

We are going to state a conjecture which concerns mod $p$ representations of $G_K$, the absolute Galois group of the quadratic imaginary $K$.

\begin{conjecture}[compare to \cite{ASiksek}*{Conjecture 3.1}] \label{serreconj} Let $p>3$ be a prime and $E$ an elliptic curve defined over $K$. Suppose that the mod $p$ Galois representation $\overline \rho_E : G_K \to \GL_2(\overline{\mathbb F}_p)$, induced by the action of $G_K$ on the $p$-torsion of E, is absolutely irreducible, continuous with Serre conductor $\mathcal N$ (prime-to-$p$ part of its Artin conductor). Assume that $\overline \rho_E|_{G_{K_{\mathfrak p}}}$ arises from a finite-flat group scheme over $\mathcal O_{K_{\mathfrak p}}$ for every prime $\mathfrak p|p$. Then there is a mod $p$ eigenform $c \in H^1(Y_0(\cN),\overline{ \F}_p)$ such that for all prime ideals $(\pi) \subseteq \mathcal O_K$, coprime to $p \cN$
	$$T_{\pi}(c) = \Tr ( \overline \rho_E( \Frob_{ (\pi) }) ) \cdot c.$$
\end{conjecture}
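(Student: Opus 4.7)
This is an instance of Serre's modularity conjecture for $\GL_2$ over an imaginary quadratic field, which remains wide open: no analogue of the Khare--Wintenberger theorem is available in this setting. The restriction to $\overline{\rho}_E$ arising from an elliptic curve nevertheless makes the conjecture significantly more tractable than the general statement. I would approach it in two stages: first, show that $\overline{\rho}_E$ is automorphic at some level divisible by the Serre conductor $\cN$; second, optimise the level down to exactly $\cN$ via a Mazur--Ribet style argument adapted to Bianchi cohomology.

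For the first stage, the key point is that $\overline{\rho}_E$ lifts canonically to the $p$-adic Galois representation $\rho_{E,p}\colon G_K \to \GL_2(\Z_p)$ coming from the Tate module of $E$. Recent progress on (potential) automorphy over CM fields, in particular the ten-author paper of Allen--Calegari--Caraiani--Gee--Helm--Le Hung--Newton--Scholze--Taylor--Thorne, establishes that $\rho_{E,p}|_{G_L}$ is automorphic for some solvable CM extension $L/K$. Applying cyclic base change for $\GL_2$ and the Eichler--Shimura--Harder isomorphism should descend this to a complex Bianchi eigenform over $K$ of some level $\cN'$ whose Hecke eigensystem matches $\rho_{E,p}$. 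Reducing modulo a prime of the Hecke coefficient ring above $p$, and using absence of $p$-torsion in $H^2(Y_0(\cN'),\Z_{(p)})$, one then invokes the Ash--Stevens lifting result as in the paper to secure a class in $H^1(Y_0(\cN'),\overline{\F}_p)$ with the desired Hecke eigenvalues.

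For the second stage, one would argue prime-by-prime on the quotient $\cN'/\cN$. Absolute irreducibility of $\overline{\rho}_E$ permits a Bianchi analogue of Mazur's principle, and the hypothesis that $\overline{\rho}_E|_{G_{K_{\mathfrak p}}}$ comes from a finite-flat group scheme ensures that the eigenform can be taken in ``weight two'' cohomology with trivial coefficients, so no companion weight issues arise at primes above $p$. Exact matching with the prime-to-$p$ Artin conductor $\cN$ requires local-global compatibility for $\overline{\rho}_E$ at each ramified prime, applied to the residual representation at each step of the level-lowering tower.

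The principal obstacle is the descent step. The locally symmetric spaces $Y_0(\cN')$ are not Shimura varieties, so tools built on coherent cohomology are unavailable, and integral cohomology over an imaginary quadratic field can carry substantial torsion. Passing from characteristic-zero automorphy to a mod $p$ statement requires tight control of the action of Hecke algebras on $H^\ast(Y_0(\cN'),\Z)$, well beyond the case-by-case torsion verifications carried out in the present paper. Moreover, a Bianchi analogue of Ribet's level-lowering theorem is not available in the generality needed. A fully unconditional proof of Conjecture~\ref{serreconj} would therefore rest on both a general modularity theorem for elliptic curves over imaginary quadratic fields and a complete level-optimisation theorem for mod $p$ Bianchi eigenforms, neither of which is presently known in the strength required.
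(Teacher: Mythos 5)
The statement you were asked to prove is stated in the paper as a \emph{conjecture} (Conjecture~\ref{serreconj}), not a theorem: the paper offers no proof of it, and all of its main results (Theorems~\ref{main} and~\ref{2inert}) are explicitly conditional on its truth. So there is no proof in the paper to compare your attempt against, and your own conclusion --- that a full proof is out of reach with current technology --- is the correct one. Your write-up is best read as a survey of what a proof would require rather than as a proof, and you are commendably upfront about this.

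A few remarks on the substance of your sketch. The first stage is broadly plausible: the ten-author paper does give potential modularity (and, combined with subsequent work, modularity in many cases) of elliptic curves over imaginary quadratic fields, and cyclic base change for $\GL_2$ can in principle descend automorphy from a solvable extension. But even granting characteristic-zero modularity of $E$ at \emph{some} level, the conjecture asks for a mod $p$ eigenclass at the exact Serre conductor $\cN$, and this is where the argument genuinely breaks: level lowering for torsion classes in the cohomology of Bianchi manifolds is not available, and the ``weight part'' of Serre's conjecture over $K$ (needed to justify that the finite-flat hypothesis at $\mathfrak p \mid p$ forces trivial coefficients) is itself conjectural --- the paper only cites the Gee--Herzig--Savitt and Gee--Liu--Savitt recipes as conjectural predictions. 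Note also that the Ash--Stevens lifting argument you invoke goes in the opposite direction to what you need here: the paper uses it (together with case-by-case verification of the absence of $p$-torsion in $H^2$) to lift mod $p$ eigenclasses to characteristic zero, whereas your stage one needs to \emph{reduce} a characteristic-zero eigenform and then lower its level mod $p$. None of this is a criticism of your assessment --- it confirms it --- but it is worth being precise that the obstruction is not only the absence of a Bianchi Ribet theorem but also the conjectural status of the weight recipe itself.
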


\medskip

Although it is conjecturally easy to predict the level $\mathcal N$ of such an eigenform, doing the same thing for the weight can be very difficult. A quite involved general weight recipe for $\GL_2$ over number fields was given by Gee, Herzig and Savitt in \cite{GeeHerzSav}. We just mention that this recipe depends on the restriction $\overline{\rho}|_{I_{\mathfrak p}}$ to the inertia subgroups for the primes $\mathfrak p \subset \mathcal O_K$ above $p$.  We only considered very special representations $\overline{\rho}$ (that are finite flat at $\mathfrak p|p$), for which Serre's original weight recipe applies and predicts the trivial weight \cite{Serre}. This is why we end up with classes in $H^1(Y_0(\mathfrak N),  \overline{\mathbb F}_p)$, the trivial weight meaning that we get $\overline{\mathbb F}_p$ as coefficient module. 

As opposed to \cite{ASiksek}*{Conjecture 3.1}, in Conjecture \ref{serreconj} we allow for the prime $p$ to ramify in $K$. The restriction in loc. cit. originates in the weight recipe given by Buzzard, Diamond and Jarvis \cite{BDJ10}, where the authors had to assume that the prime $p$ is unramified in $K$. This restriction was since then removed in a series of papers of Gee et al. \cite{GeeLiuSav,GeeHerzSav}. Due to technicalities arising in the weight recipe when $p$ is ramified in $K$, we had to assume that our Galois representation is the reduction of one coming from an elliptic curve. Indeed, by \cite{GeeLiuSav}*{Section 4.1.2} the hypothesis on $\rhobar_{E}$ imply that all $a_i=b_i=0$ in loc. cit. and this predicts a trivial weight if and only if the representation has a crystaline lift with all pairs of labelled Hodge-Tate weights equal to $\{0,1 \}$. In the hypothesis of the conjecture formulated above, $\rhobar_E$ is the mod $p$ reduction of a $p$-adic representation coming from an elliptic curve, therefore such a lift already exists. The recipes for the weights in \cite{BDJ10} and \cite{GeeLiuSav} were given for totally real fields but as the problem of weights is a local issue, these (conjecturally) apply to any number field.

\medskip

\noindent\textbf{Remark.}  When stated for more general number fields, this conjecture restricts to odd representations. A representation is odd if the determinant of every complex conjugation is $-1$, but since our $K$ is totally complex $G_K$ does not contain any complex conjugations and we will regard every mod $p$ representation of $G_K$ automatically as odd. 

\section{Fermat equation with exponent $p$ and the Frey curve}
\label{FermatpFrey}

Let us fix some notation:

$p$ - a rational prime number;

$K= \Q(\sqrt{-d})$ where $d$ is one of  $3$, $11$, $19$, $43$, $67$ or $163$;

$\mathcal O_K$ - the ring of integers of $K$;

$\mathfrak q = (2 \cO_K)$ - the prime ideal of residue field $\F_{4}$ that lies above $2$. 

\vspace{0.5cm}

By the Fermat equation with exponent $p$ over $K$, we mean
\begin{equation} \label{fermeq}
a^p + b^p + c^p=0, \hspace{1cm} a,b,c \in \mathcal O_K.
\end{equation}

We say that a solution $(a,b,c) \in \mathcal O_K^3$ to the equation above is trivial if $abc = 0$ and non-trivial otherwise. We shall henceforth assume that $p \geq 19$ when $d=163$ and $p \geq 17$ otherwise.

 Let $(a,b,c) \in \cO_K^3$ be a non-trivial solution to \eqref{fermeq} such that $a,b,c$ are coprime. One can always assume that $a,b,c$ are coprime, since the class number of $K$ is one. Associated to $(a,b,c)$ is the Frey curve
\begin{equation} \label{freycurve}
E=E_{a,b,c}: Y^2 = X(X-a^p)(X+b^p).
\end{equation}

Write $\overline{\rho}=\overline{\rho}_{E,p}$ for the residual Galois representation
$$\overline{\rho}_{E,p}: G_K \to \Aut(E[p]) \cong \GL_2(\mathbb F_p )$$
induced by the action of $G_K$ on the $p$-torsion of $E[p]$. The following are easy to deduce proprieties of $E$ and $\rhobar$ which are proved in multiple papers concerning Fermat's equation over number fields. For a precise reference, see \cite{turcas2018}*{Lemma 3.1}:
\begin{itemize}
	\item the elliptic curve $E$ is semistable away from $\mathfrak q= (2 \cO_K)$;
	\item the Galois representation $\rhobar$ is unramified at the primes not dividing $2p$, finite flat at every prime $\mathfrak p$ of $\cO_K$ that lies above $p$ and $\det(\rhobar)=\chi_p$, the mod $p$ cyclotomic character.
\end{itemize}

From now on, we add the assumption that $2 \mid abc$. It follows from \cite{AFreitasSiksek}*{Lemma 4.2} that $E$ has potentially multiplicative reduction at $\mathfrak q$. We denote by $N_E$ the conductor of $E$ and we will apply \cite{AFreitasSiksek}*{Lemma 4.4} to determine $v_{\mathfrak q}(N_E)$, the valuation of $N_E$ at $\mathfrak q$. Let us demystify the quantities introduced in the aforementioned lemma. The ideal $\mathfrak b$ is equal to $\mathfrak q^3$ and has norm $64$.  The group of units of the quotient $\cO_K/ \mathfrak b$ is isomorphic to $(\Z/2\Z)^2 \oplus \Z/12\Z $ and therefore the co-domain of $\Phi : \mathcal O_K^{*} \to (\mathcal O_K/ \mathfrak b)^*/(\mathcal O_K / \mathfrak b)^{*2} $ is isomorphic to $(\Z/2\Z)^3$. For every considered $K$, the image of $\Phi$ is isomorphic with $\Z/2\Z$, so $\Coker(\Phi) \cong (\Z/2\Z)^2$.

We present, for each of the considered six fields $K=\Q(\sqrt{-d})$, a complete list of representatives of this cokernel and the maximal value for the exponent of $\mathfrak q$ in $N_E$. As it can be seen in the table below, if $K= \Q(\sqrt{-d})$ is one of the aforementioned fields and $2 \mid abc$, by \cite{AFreitasSiksek}*{Lemma 4.4} we can scale the triple $(a,b,c)$ by a unit so that the valuation of the conductor $N_E$ of the Frey curve at $\mathfrak q = 2 \cO_K$ is at most $4$.

\begin{table}[h]	
	\caption{Computations associated to Lemma 4.4 in \cite{AFreitasSiksek}}
	\begin{center}
		\label{corepz}
		\begin{tabular}{|c|c|c|}
			\hline
			$d$     & \textbf{Reps. $\lambda_1, \lambda_2, \lambda_3, \lambda_4 \in \mathcal O_K$ of }$\Coker(\Phi)$ &  $v_{\mathfrak q}(N_E)$ \\ \hline
			$3$   &    $1, \, \frac{-1+3\sqrt{-3}}{2}, \,  3+2\sqrt{-3}, \, \frac{3-\sqrt{-3}}{2}$       &           $4$  \\ \hline
			$11$  &  $1, \,  \frac{-1+\sqrt{-11}}{2}, \, -1+2\sqrt{-11}, \,  \frac{-5-3\sqrt{-11}}{2}  $         &            $4$   \\ \hline
			$19$  &  $1, \, \frac{1+ 3\sqrt{-19}}{2}, \,   3+ 2\sqrt{-19}, \, \frac{9+ 3 \sqrt{-19}}{2}$         &           $4$    \\ \hline
			$43$ & $1, \,  \frac{-7- \sqrt{-43}}{2}, \,  -1+ 2\sqrt{-43}, \,   \frac{-3+3 \sqrt{-43}}{2}$          &              $4$    \\ \hline
			$67$ &  $1, \, \frac{1+ 3\sqrt{-67}}{2},\,  1+2\sqrt{-67}, \,  \frac{-9-3\sqrt{-67}}{2} $         &          $4$       \\ \hline
			$163$&  $1, \, \frac{1+3\sqrt{-163}}{2}, \,  1+2\sqrt{-163}, \, \frac{-9-3\sqrt{-163}}{2}  $         &          $4$          \\ \hline
		\end{tabular}
	\end{center}
\end{table}

For applying Conjecture \ref{serreconj} to the Galois representation $\overline{\rho}= \overline{\rho}_{E,p}$, we have to prove that it is absolutely irreducible. 

\begin{theorem} \label{IsIrred} Let $K$ be one of the six fields above and consider a prime $p \geq 17$. If $(a,b,c) \in \mathcal O_K^{3}$ with $2 \mid \norm(abc)$ is a non-trivial solution to \eqref{fermeq} such that $a$, $b$ and $c$ are coprime, then $\rhobar_{E,p}$ is absolutely irreducible. 
\end{theorem}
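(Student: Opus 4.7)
Argue by contradiction, adapting the strategy of \cite{turcas2018} to the new setting where $\mathfrak{q}$ has residue field $\F_4$. Assume $\rhobar := \rhobar_{E,p}$ is not absolutely irreducible; then after semisimplification we may write
$$\rhobar^{ss} \simeq \theta \oplus \theta'$$
for continuous characters $\theta, \theta' \colon G_K \to \overline{\F}_p^{\,\times}$ with $\theta\theta' = \chi_p$. The three bullet points preceding the theorem tell us that $\theta$ and $\theta'$ are unramified outside $\{\mathfrak{q}\} \cup \{\mathfrak{p} \mid p\}$ and that $\rhobar$ is finite flat at each $\mathfrak{p} \mid p$. The hypothesis $2 \mid abc$ further ensures via \cite{AFreitasSiksek}*{Lemma 4.2} that $E$ has potentially multiplicative reduction at $\mathfrak{q}$. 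Tate's uniformisation over $K_\mathfrak{q}$ then forces $\rhobar|_{G_{K_\mathfrak{q}}}^{ss} \simeq \phi \oplus \chi_p\phi$ for a character $\phi$ of order dividing $2$; since $p \geq 17$ makes $\chi_p$ unramified at $\mathfrak{q}$, we conclude that $\theta|_{I_\mathfrak{q}}$ has order at most $2$.

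I would next analyse the inertia at each $\mathfrak{p} \mid p$. Finite flatness of $\rhobar$ at $\mathfrak{p}$ together with $\det \rhobar = \chi_p$ forces $\{\theta|_{I_\mathfrak{p}}, \theta'|_{I_\mathfrak{p}}\} = \{1, \chi_p|_{I_\mathfrak{p}}\}$, by Raynaud's classification of finite flat group schemes killed by $p$ over $\cO_{K_\mathfrak{p}}$; the argument is uniform whether $p$ is split, inert, or ramified in $K$, because the reducibility of $\rhobar$ globally descends to each decomposition group and rules out any genuine level--$2$ fundamental character. Combined with $\Cl_K = 1$, class field theory then expresses $\theta$ as a character of a quotient of $(\cO_K/\mathfrak{q}^{a}(p)^{b})^{\times}/\cO_K^{\times}$ for small $a, b$, producing an explicit finite list of candidate pairs $(\theta, \theta')$ independent of $p$. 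For the five fields with $\cO_K^{\times} = \{\pm 1\}$ this list is short; for $K = \Q(\sqrt{-3})$ the sixth roots of unity in $\cO_K^{\times}$ must be tracked through the bookkeeping, but the list remains finite.

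To extract the contradiction, pick a rational prime $\ell \neq 2, p$ of good reduction for $E$ and a prime $\mathfrak{l}$ of $\cO_K$ above $\ell$. Then
$$a_\mathfrak{l}(E) \equiv \theta(\Frob_\mathfrak{l}) + \theta'(\Frob_\mathfrak{l}) \pmod{\mathfrak{P}}$$
for some prime $\mathfrak{P}$ of $\overline{\Z}$ above $p$, and the Hasse bound $|a_\mathfrak{l}(E)| \leq 2\sqrt{\norm(\mathfrak{l})}$ forces $p$ to divide the norm of a specific nonzero algebraic integer attached to each candidate $(\theta, \theta')$. Running $\mathfrak{l}$ over a small set of primes adapted to each $K$ and over the finite list of candidates yields an absolute upper bound on $p$ below $17$ (respectively $19$ when $d = 163$), contradicting our hypothesis.

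The principal obstacle is the local analysis at $\mathfrak{q}$: because the residue field is $\F_4$ rather than $\F_2$, both the set of possible unramified quadratic twists and the contribution of $\mathfrak{q}$ to the conductor are larger than in \cite{turcas2018}, and this is precisely why the hypothesis $2 \mid abc$ is imposed --- without it one cannot guarantee potentially multiplicative reduction and the restriction of $\theta$ to $I_\mathfrak{q}$ becomes uncontrolled. A secondary obstacle is the case $K = \Q(\sqrt{-3})$, whose extra units of orders $3$ and $6$ enrich the pool of candidate characters in the global step and complicate the final trace comparison.
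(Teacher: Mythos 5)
Your overall strategy (reduce to a pair of diagonal characters, control their ramification locally, and eliminate candidates) is in the right family, but there are two genuine gaps. First, the claim that finite flatness at each $\mathfrak p \mid p$ ``uniformly'' forces $\{\theta|_{I_{\mathfrak p}},\theta'|_{I_{\mathfrak p}}\}=\{1,\chi_p|_{I_{\mathfrak p}}\}$ is false when $p$ ramifies in $K$: at the ramified prime $\mathfrak p$ with $e(\mathfrak p/p)=2$, good supersingular reduction is compatible with reducibility and gives $\theta|_{I_{\mathfrak p}}=\theta'|_{I_{\mathfrak p}}=\psi_1$, the level-one fundamental character, whose square is $\chi_p|_{I_{\mathfrak p}}$ (see \cite{FreitasSiksek}*{Proposition 6.1}); neither character is then trivial or cyclotomic on inertia, so this branch escapes your candidate list entirely. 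The paper deals with it by a separate argument: $\varepsilon=\theta/\theta'$ is shown to be everywhere unramified, hence trivial because the class number is one, and evaluating $\varepsilon$ at a Frobenius at $\mathfrak q$ against the known eigenvalue pairs $(1,\norm(\mathfrak q))$, $(-1,-\norm(\mathfrak q))$ forces $p\mid\norm(\mathfrak q)-1=3$. The inert case is likewise not ``uniform''; it is handled via Kraus's Lemme 1, which guarantees one of the two characters is unramified at $p\mathcal O_K$ and reduces to the split/unramified analysis.

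Second, you never separate ``reducible'' from ``irreducible but absolutely reducible.'' The paper's first move is to show that irreducibility already implies absolute irreducibility: since $2\mid abc$, one has $v_{\mathfrak q}(j(E))=4-2pv_{\mathfrak q}(abc)<0$ with $p\nmid v_{\mathfrak q}(j(E))$, so Tate's theory produces a unipotent element of order $p$ in the image, and an irreducible subgroup of $\GL_2(\F_p)$ containing an element of order $p$ contains $\SL_2(\F_p)$. This lets the rest of the argument work with $\F_p$-valued characters (so that, for instance, ``$\theta^2$ everywhere unramified $\Rightarrow\theta^2=1\Rightarrow E$ or its quadratic twist has a $K$-rational point of order $p$'' makes sense), and the contradiction in the main case then comes from Kamienny's bound $p\le 13$ on prime torsion of elliptic curves over quadratic fields --- no computation required. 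Your endgame is instead an unexecuted trace/Hasse-bound computation over a candidate list whose finiteness and independence of $p$ are not established (the conductor at $\mathfrak p\mid p$ varies with $p$ unless one first twists away the cyclotomic part, which is exactly the isogeny-replacement step in the paper), and the assertion that this computation yields a bound below $17$ is stated without justification.
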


\begin{proof}
	
	We claim that absolute irreducibility follows from irreducibility. Suppose that $\rhobar_{E,p}$ is irreducible. 
	Let $\mathfrak q$ be a prime of $K$ above $2$ that divides the product $abc$. Recall that $E$ has potentially multiplicative reduction at $\mathfrak q$. 
	
	Using the identity $j(E)=\frac{2^4(b^{2p}-a^pc^p)}{(abc)^{2p}}$, we see that $v_{\mathfrak q}(j(E))=4-2pv_{\mathfrak q}(abc)$ is negative and $p \nmid v_{\mathfrak q}(j(E))$. It follows from the theory of Tate curves \cite{silver2}*{Proposition 6.1} that there is an element $\sigma \in I_{\mathfrak q} \subseteq G_K$ that acts on $E[p]$ via a matrix of the form $\left( \begin{array}{cc} 1 & 1 \\ 0 & 1 \end{array} \right)$. The image $\rhobar_{E,p}(G_K)$ is therefore an irreducible subgroup of $\GL_2(\F_p)$ which contains en element of order $p$ and the classification \cite{serre72}*{Proposition 15} of maximal subgroups of $\GL_2(\F_p)$ implies that $\SL_2(\F_p) \subseteq \rhobar_{E,p}(G_K)$. As $\SL_2(\mathbb F_p)$ is an absolutely irreducible subgroup of $\GL_2(\mathbb F_p)$, the claim is proved. If $p$ does not ramify in $K$, we can say even more, namely that $\rhobar_{E,p}$ is surjective. This is a consequence of the fact that $\det(\rhobar_{E,p})$ is the mod $p$ cyclotomic character, which is surjective when $K \cap \Q(\zeta_p) = \Q$.
	
	 It remains to prove that $\rhobar_{E,p}$ is irreducible. If we suppose the contrary, we can write
	\begin{equation} \label{rhored} \rhobar_{E,p} \sim \left( \begin{array}{cc} \theta & * \\ 0 & \theta'  \end{array} \right) \end{equation}
	where $\theta$ and $\theta'$ are characters $G_K \to \mathbb F_p^{*}$ and $\theta \theta'= \chi_p$, the mod $p$ cyclotomic character given by the action of $G_K$ on the group $\mu_p$ of $p$-th roots of unity. Let us denote by $\cN_{\theta}, \cN_{\theta'}$ the conductors of $\theta, \theta'$ respectively.
	These characters are unramified away from $p$ and $2\mathcal O_K = \mathfrak q$, the only prime of additive reduction for $E$ (see \cite{KrausQ}*{Lemma 1}).
	
Moreover, we saw above that $\mathfrak q$ is a prime of potentially multiplicative reduction for $E$. Write $D_{\mathfrak q} \subseteq G_K$ for the decomposition subgroup at $\mathfrak q$.  The restriction $\rhobar_{E,p}$ to $D_{\mathfrak q}$ is, up to semi-simplification, equal to $\phi \oplus \phi \cdot \chi_p$, where $\phi$ is at worst a quadratic character (see \cite{silver2}*{Theorem V.5.3}). In particular, both $\theta^2$ and $\theta'^2$ are unramified at $\mathfrak q$.
	
	\textbf{(i)} We will first assume that $p$ is coprime to either $\cN_{\theta}$ or $\cN_{\theta'}$. Since the conductor of an elliptic curve is isogeny invariant, by eventually replacing $E$ with the $p$-isogenous curve $E/ \langle \theta \rangle$ we can assume that $p$ is coprime to $\cN_{\theta}$. This implies that $\theta$ is unramified away from $\mathfrak q$. By the above, we infer that $\theta^2$ is everywhere unramified. The crucial fact that $K$ has class number $1$ allows us to deduce that $\theta^2$ is the trivial character. Now, observe that either $E$ or its twist by the quadratic character $\theta$ has a point of order $p$ defined over $K$. The former instance happens precisely when $\theta$ is trivial itself and the latter when $\theta$ is quadratic.
	
	The possible prime torsion of elliptic curves over quadratic fields have been determined by Kamienny and his result \cite{Kamienny1992}*{Theorem 3.1} implies that $p \leq 13$, a contradiction. 
	
	\textbf{(ii)} Suppose that $p$ is not coprime with $\cN_{\theta}$ nor with $\cN_{\theta'}$. 
	
	We now show that under the assumption that $\rhobar_{E,p}$ is reducible, the prime $p$ does not ramify in $K$. Suppose it does  and let $\mathfrak p$ be the unique prime ideal of $\mathcal O_K$ such that $p \mathcal O_K=\mathfrak p^2$. Recall that the Frey curve $E$ has semistable reduction at $\mathfrak p$. We therefore see from \cite{FreitasSiksek}*{Proposition 6.1 (ii)} that if $\mathfrak p$ is a prime of good ordinary, or multiplicative reduction then
	$$\rhobar_{E,p}|_{I_{\mathfrak p}} \sim \left( \begin{array}{cc} \chi_p & * \\ 0 & 1 \end{array} \right).$$
	This would imply that one of the diagonal characters is not ramified at $\mathfrak p$ and we showed in \textbf{(i)} that this is not possible. That leaves us with the possibility for $\mathfrak p$ to be a prime of good supersingular reduction. In this situation \cite{FreitasSiksek}*{Proposition 6.1.} asserts that either
	\begin{equation}\label{lev1and2} \rhobar_{E,p}|_{I_{\mathfrak p}} \sim \left( \begin{array}{cc} \psi_2^2 & 0 \\ 0 & \psi_2^{2p} \end{array} \right) \text{ or } \rhobar_{E,p}|_{I_{\mathfrak p}} \sim \left( \begin{array}{cc} \psi_1 & 0 \\ 0 & \psi_1 \end{array} \right), \end{equation}
	where $\psi_1: I_{\mathfrak p} \to \F_p^{*}$ and $\psi_2 : I_{\mathfrak p} \to \F_{p^2}^*$ are the level $1$ and respectively $2$  fundamental characters defined in \cite{serre72}. The first possibility in \eqref{lev1and2} implies that $\theta|_{I_{\mathfrak p}}= \psi_2^2$ or $\theta|_{\mathfrak p} = \psi_2^{2p}$, which is impossible since $\psi_2^2$ and $\psi_2^{2p}$ are not $\F_p$-valued. Hence the restrictions of both $\theta$ and $\theta'$ to $I_{\mathfrak p}$ coincide with $\psi_1$.
	
	Recall that $\theta$ and $\theta'$ are unramified outside $2\mathcal O_K=\mathfrak q$ and $\mathfrak p$, so their conductors $\cN_{\theta'}, \cN_{\theta'}$ are supported on these two primes. Define $\varepsilon : G_K \to \F_p^{*}$ by $$\varepsilon = \theta/ \theta' = \theta^2/\chi_p.$$
	Since the restrictions of $\theta, \theta'$ to $I_{\mathfrak p}$ coincide, the character $\epsilon$ is unramified at $\mathfrak p$. The latter is also unramified away from $\mathfrak q$, because $\theta$ and $\theta'$ are so. Its conductor $\cN_{\varepsilon}$ is then a power of $\mathfrak q$. When restricted to the inertia subgroup of $\mathfrak q$ the cyclotomic character $\chi_p$ is trivial, hence $\varepsilon|_{I_{\mathfrak q}} = \theta^2|_{I_{\mathfrak q}}$. We remarked at the start of this proof that $\theta^2$ is unramified at $\mathfrak q$, so $\varepsilon$ is everywhere unramified. Again, from the fact that $K$ has class number one we derive that $\varepsilon$ is trivial.
	
	Let $\sigma_{\mathfrak q}$ be a Frobenius element of $\mathfrak q$. Since $\mathfrak q$ is a prime of potentially multiplicative reduction, it is know (see for instance \cite{ASiksek}*{Lemma 6.3}) that the possible pairs of eigenvalues of $\rhobar_{E,p}(\sigma_{\mathfrak q})$ are $(1, \norm(\mathfrak q))$ or $(-1, - \norm(\mathfrak q))$. We therefore get
	$$ 1 = \varepsilon(\sigma_{\mathfrak q})=  \theta(\sigma_{\mathfrak q})/ \theta'(\sigma_{\mathfrak q})  \equiv \norm(\mathfrak q)^{\pm 1} \pmod{p},$$
	so $p \mid \norm(\mathfrak q)-1 =3 $, which contradicts the hypothesis on $p$.
	
	We proved that $p$ does not ramify in $K$. If $p$ is inert, we can apply \cite{KrausQ}*{Lemme 1} to deduce that at least one of $\theta$ or $\theta'$ does not ramify at $p\mathcal O_K$, which puts us again in case \textbf{(i)}.
	
	The only possibility remaining is that $p$ splits in $K$. Let $\mathfrak p_1, \mathfrak p_2$ be the two ideals of $\mathcal O_K$ such that $p \mathcal O_K= \mathfrak p_1 \mathfrak p_2$. These primes are both of semistable reduction for $E$ so by Lemme 1 in loc. cit., swapping $\theta$ and $\theta'$ if necessary, we can suppose that $\mathfrak p_1 \mid \cN_{\theta}$, $\mathfrak p_1 \nmid \cN_{\theta'}$ and $\mathfrak p_2 \mid \cN_{\theta'}$, $\mathfrak p_2 \nmid \mathcal N_{\theta}$. The primes $\mathfrak p_1, \mathfrak p_2$ are unramified so it follows from \cite{serre72}*{Proposition 12} that $E$ has good ordinary or multiplicative reduction at these primes and that $\theta|_{I_{\mathfrak p_1}} = \chi_p|_{I_{\mathfrak p_1}}$ and $\theta'|_{I_{\mathfrak p_2}}= \chi_p|_{I_{\mathfrak p_2}}$.
	
It follows that the character $\theta^2$ is unramified everywhere except $\mathfrak p_1$, because the only bad place $\mathfrak q$ of $E$ is of potential multiplicative reduction, as explained at the beginning of the proof. Using Lemma 4.3 in \cite{turcas2018} with $\alpha = 2 \in K$, it follows that
	$$\theta^2(\sigma_{\mathfrak q}) \equiv \norm_{K_{\mathfrak p_1}/\Q_p}\left( \iota_{\mathfrak p_1}(2) \right)^2 \pmod{p},$$
	where $\sigma_{\mathfrak q}$ is the Frobenius element at $\mathfrak q$.
	
	Appealing to Lemma 6.3 in \cite{ASiksek} again, we derive that $\theta^2(\sigma_{\mathfrak q}), \theta'^2(\sigma_{\mathfrak q})$ are congruent (up to reordering) to $1$ and $\norm^2(\mathfrak q)$ modulo $p$. After replacing $E$ by the isogenous curve $E/ \langle \theta \rangle$, we can assume that $\theta^2(\sigma_{\mathfrak q}) \equiv 1 \pmod{p}$. We have that $$\norm_{K_{\mathfrak p_1}/\Q_p}\left( \iota_{\mathfrak p_1}(2) \right)^2-1 =3,$$ so $p \mid 3$, a contradiction.
\end{proof}
Let $K$ be a general number field and $E$ an elliptic curve defined over $K$ such that
		it is semistable at all primes $\mathfrak p \subset \mathcal O_K$ above $p$ and 
	has potentially multiplicative reduction at a prime $\mathfrak q \neq \mathfrak p$.
 There exists an explicit constant $B_{K, \mathfrak q}$ such that for all $p > B_{K, \mathfrak q}$ the mod $p$ Galois representation arising by the action of $G_K$ on the $p$-torsion points of $E$ is absolutely irreducible (see \cite{ASiksek}*{Proposition 6.1 and Corollary 6.2}). In Theorem \ref{IsIrred} we find the smallest possible values for $B_{\mathfrak q, K}$ in the case when $K$ is quadratic imaginary of class number one and $\mathfrak q$ is a prime above $2$.

\section{The proof of Theorem \ref{2inert}}
\label{proof2inert}

We proceed by contradiction and, assuming there is such a solution, we first scale it such as in Table \ref{corepz} and then construct the Frey curve $$E = E_{a,b,c} : Y^2 = X(X-a^p)(X+b^p).$$
The next step in our approach is to show the mod $p$ Galois representation $\rhobar_{E,p}$ satisfies the hypothesis of Serre's conjecture. Absolute irreducibility was proved in Theorem \ref{IsIrred} and all the other hypothesis follow easily from the discussion at the beginning of the previous section. The Serre conductor $\mathcal N$ of $\rhobar_{E,p}$ is supported only on the prime $\mathfrak q = (2\mathcal O_K)$ and from Table \ref{corepz} we know that $\mathcal N$ is a divisor of $\mathfrak q^4$.

Conjecture \ref{serreconj} predicts the existence of a mod $p$ Bianchi modular form $c \in H^{1}(Y_0(\mathcal N), \overline{\F}_p)$ such that for every prime ideal $(\pi) \subset \mathcal O_K$, coprime to $p \mathcal N$ we have
$$T_{\pi}(c)= \Tr(\rhobar_{E,p}(\Frob_{(\pi)})) \cdot c.$$

The trace elements $\Tr(\rhobar_{E,p}(\Frob_{(\pi)}))$ lie in $\F_p$, therefore $c \in H^{1}(Y_0(\mathcal N), \F_p)$.

We  fix an embedding from $\overline{\Q} \hooklongrightarrow \mathbb C$.
Unlike the classical situation in which $K= \Q$, when $K$ is a general number field not all mod $p$ eigenforms lift to complex ones. To explain this, let us denote by $\Z_{(p)}$ the ring of rational numbers with denominators prime to $p$.  Consider the following short exact sequence given by multiplication-by-$p$

\begin{center}
	\begin{tikzcd}	
		0 \arrow[r] & \mathbb Z_{(p)} \arrow[r, "\times p"]  & \mathbb Z_{(p)} \arrow[r]  & \mathbb F_p \arrow[r] & 0
	\end{tikzcd}.
\end{center}
This gives rise to a long exact sequence on cohomology
\begin{center}
	\begin{tikzcd}
		\dots H^1(Y_0(\mathfrak N), \mathbb Z_{(p)}) \arrow[r,"\times p"] & H^1(Y_0(\mathfrak N), \mathbb Z_{(p)}) \arrow[r] \arrow[d, phantom, ""{coordinate, name=Z}] & H^1(Y_0(\mathfrak N), \mathbb F_p) \arrow[dll,
		"\delta",
		rounded corners,
		to path={ -- ([xshift=2ex]\tikztostart.east)
			|- (Z) [near end]\tikztonodes
			-| ([xshift=-2ex]\tikztotarget.west)
			-- (\tikztotarget)}] \\ H^{2}(Y_0(\mathfrak N), \mathbb Z_{(p)}) \arrow[r] & \dots
	\end{tikzcd}
\end{center}
from which we can extract the short exact sequence
\begin{center} 
	\begin{tikzcd}[column sep = small]
		0 \arrow[r] & H^1(Y_0(\mathfrak N), \mathbb Z_{(p)}) \otimes \mathbb F_p \arrow[r] & H^1(Y_0(\mathfrak N), \mathbb F_p) \arrow[r, "\delta"] & H^{2}(Y_0(\mathfrak N), \mathbb Z_{(p)})[p] \arrow[r] & 0
	\end{tikzcd}.	
	
\end{center}

In the above, the presence of $p$-torsion in $H^{2}(Y_0(\mathfrak N), \mathbb Z_{(p)})$ is the obstruction to surjectivity for the map $H^1(Y_0(\mathfrak N), \mathbb Z_{(p)}) \otimes \F_p \to H^1(Y_0(\mathfrak N), \mathbb F_p)$. If there is only trivial such torsion, then any Hecke eigenvector $\overline c$ in $H^{1}(Y_0(\mathfrak N), \mathbb F_p)$ comes from such an eigenvector in $H^{1}(Y_0(\mathfrak N), \mathbb Z_{(p)}) \otimes \F_p$. Using a lifting lemma of Ash and Stevens \cite{ash1986}*{Proposition 1.2.2}, we deduce that there are
\begin{enumerate}
	\item a finite integral extension $R$ of $\Z_{(p)}$
	\item  a prime $\mathfrak p$ of $R$ above $p$ and
	\item a Hecke eigenvector $c$ in $H^{1}(Y_0(\mathfrak N), R)$
\end{enumerate}
such that the Hecke eigenvalues of $c$ reduced modulo $\mathfrak p$ are equal to the ones of $\overline{c}$. Using our fixed embedding $\overline{\Q} \hookrightarrow \C$ we can regard $c$ as a class in $H^{1}(Y_0(\cN), \C)$, which implies the existence of our sought after complex eigenform. 

\medskip

\noindent \textbf{Remark.} We observe that in the paragraph above, $\overline{c}$ is not necessarily the reduction of $c$. The result that we cite only states that \textit{a system of eigenvalues} occurring in $\F_p$ may, after finite base extension, be lifted to a system occurring in $\Z_{(p)}$. The interested reader should consult \cite{ash1986}*{Section 1.2} for a more illuminating discussion. 

\medskip

  One sees that $H^2(Y_0(\mathcal N), \mathbb Z_{(p)})$ and $H^2(Y_0(\mathcal N), \mathbb Z)$ have the same $p$-torsion. As discussed in \cite{ash1986}*{page 202}, if the least common multiple of the orders of elements of finite order in $\Gamma_0(\mathcal N)$ is invertible in the coefficients module, then simplicial cohomology and group cohomology are the same. In our case, $\mathcal N = \mathfrak q^4$ and all the elements of finite order in $\Gamma_0(\mathcal N)$ have orders dividing $6$. Therefore,  $H^2(Y_0(\mathcal N), \mathbb Z[\frac{1}{6}])$ and $H^2(\Gamma_0(\mathcal N), \mathbb Z[\frac 1 6])$ are isomorphic as Hecke modules.

Lefschetz duality for cohomology with compact support \cite{sentors}*{Section 2} gives a relation between the first homology and the second cohomology
$H_{1}(\Gamma_0(\mathcal N), \mathbb Z\left[\frac{1}{6}\right]) \cong H^2(\Gamma_0(\mathcal N) , \mathbb Z \left[\frac{1}{6}\right])$. It is also known that the abelianization $\Gamma_0(\mathcal N)^{ab} \cong H_{1}(\Gamma_0(\mathcal N), \mathbb Z)$ and therefore, for primes $p >3$, if the group $H^2(Y_0(\mathcal N) , \mathbb Z)$ has a $p$-torsion element, then $\Gamma_0(\mathcal N)^{ab}$ will have a $p$-torsion as well. We compute the abelianizations $\Gamma_0(\cN)^{ab}$ using an algorithm of Haluk \c Seng\" un \cite{sentors}. The \textbf{Magma} implementation of this algorithm was kindly provided to us by its author. The algorithm requires as input presentations for $\PGL_2(\cO_K)$, which we compute using a program of Page \cite{Pag15}. The relevant \textbf{Magma} files can be found at
\begin{center}
	\url{https://warwick.ac.uk/fac/sci/maths/people/staff/turcas/fermatprog}.
\end{center}

We record the primes $l$ that appear as orders of torsion elements in $\Gamma_0(\cN)^{ab}$, for each of the six number fields in Table \ref{primetorsion2}.

\begin{center}
	\begin{table}[h]
		\centering
		\caption{prime torsion in $\Gamma_0(\mathcal N)^{ab}$}
		\label{primetorsion2}
		\begin{tabular}{|c|c|l|}
			\hline
			\textbf{Number field}         & \textbf{Level $\mathcal N$} & \textbf{primes $l$ such that $\Gamma_0(\mathcal N)^{ab}[l] \neq 0$ } \\ \hline
			$\mathbb Q(\sqrt{-3})$&      $(2\mathcal O_K)^4$      &  $2,3$          \\ \hline
			$\mathbb Q(\sqrt{-11})$&  $(2 \mathcal O_K)^4$         & $2,3$           \\ \hline
			$\mathbb Q(\sqrt{-19})$& $(2 \mathcal O_K)^4$	& $2,3$ \\ \hline
			$\mathbb Q(\sqrt{-43})$& $(2 \mathcal O_K)^4$ & $2,3$ \\ \hline
			$\mathbb Q(\sqrt{-67})$& $(2 \mathcal O_K)^4$& $2,3$ \\ \hline
			$\mathbb Q(\sqrt{-163})$& $(2 \mathcal O_K)^4$ & $2,3,5,11,17$\\ \hline 
		\end{tabular}
	\end{table}
\end{center}

Since we have chosen $p \geq 19$ and there is no $p$-torsion in the subgroups of interest, the mod $p$ eigenforms must lift to complex ones.  We obtain a fixed, finite list of cuspidal Bianchi newforms of level dividing $\mathfrak q^4$ to which our mod $p$ eigenform can lift. For each Bianchi newform $\mathfrak f$ in this list, we denote by $\Q_{\ff}$ the number field generated by their eigenvalues. The process described above guarantees that for every $\mathfrak l \nmid \mathfrak q^4 \cdot p$, prime ideal of $K$ we get the following congruence
$$\Tr(\rhobar_{E,p}(\sigma_{\mathfrak l})) \equiv a_{\mathfrak l}(\ff) \pmod{\mathfrak P},$$
between the trace of the image of Frobenius at $\mathfrak l$ in $\rhobar_{E,p}$ and the Hecke eigenvalue of $\ff$ at $\mathfrak l$. Here $\mathfrak P$ is some ideal of $\Q_{\ff}$ that lies above the prime $p$. We now use the idea in \cite{FreitasSiksek}*{Lemma 7.1} to obtain an upper bound on the prime exponent $p$. Although the work in loc. cit. is carried for Hilbert modular forms, the proof of this lemma holds through for Bianchi modular forms $\ff$. We describe the idea below.

Let us fix a prime ideal $\mathfrak l$ as above. The Frey curve $E=E_{a,b,c}$ has good or multiplicative reduction at $\mathfrak l$. If it has good reduction, then $\Tr(\rhobar_{E,p}(\sigma_{\mathfrak l})) \equiv a_{\mathfrak l}(E) \equiv a_{\mathfrak l}(\ff) \pmod{\mathfrak P}$. By definition, $E$ has full two-torsion defined over $K$ and $\mathfrak l \nmid 2$, so $4 \mid \#E(\F_{\mathfrak l})= \norm(\mathfrak l)+1-a_{\mathfrak l}(E)$. Adding the information provided by the Hasse-Weil bounds we get that $a_{\mathfrak l}(E)$ belongs to the finite set
$$\cA_{\mathfrak l} = \{ a \in \Z : |a| \leq 2 \sqrt{\norm(\mathfrak l)}, \, \,  \norm(\mathfrak l) + 1 - a \equiv 0 \pmod{4} \}.$$
If $\fq \nmid 2p$ is a prime of multiplicative reduction, then $$\Tr(\rhobar_{E,p}(\sigma_{\mathfrak l})) = \pm (\norm(\mathfrak l)+1) \Rightarrow \mathfrak P \mid \left(\norm(\mathfrak l)+1 \right)^2-a_{\mathfrak l}(\ff)^2.$$
If $\mathfrak l \mid p$, obviously $p \mid \norm(\mathfrak l)$.
For every prime ideal $\mathfrak l$ of $K$ that does not divide $2$, denote $$B_{\ff,\mathfrak l} = \norm(\mathfrak l) \left( \left( \norm(q)+1 \right)^2 - a_{\fq}(\ff)^2 \right) \prod_{a \in \cA_{\mathfrak l}} (a- a_{\mathfrak l}(\ff) \cdot \cO_{\Q_{\ff}} ).$$
The above proves that $\mathfrak P \mid B_{\ff,\mathfrak l}$ and, by taking norms, that $p \mid \norm(B_{\ff,\mathfrak l})$. 

Using \textbf{Magma}, we computed the cuspidal newforms $\ff$ at levels dividing $\mathfrak q^4$, the fields $\Q_{\ff}$ and eigenvalues $a_{\mathfrak l}(\ff)$ at primes $\mathfrak l$ of $K$ that have norm less than $50$. We computed the ideal $C_{S,\ff} \subseteq \cO_K$, the greatest common divisor of $B_{\ff,\mathfrak l}$ when $\mathfrak l \nmid 2$ runs throw a set $S$ of prime ideals of $K$ that have norm less than $50$. If $C_{S,\ff}$ is not zero, then $p \mid \norm(C_{\ff})$ gives an upper bound on $p$. For every number field $K$ in the statement of Theorem \ref{2inert} and every cuspidal eigenform $\ff$ of level dividing $\mathfrak q^4$, we computed a non-zero constant $C_{S,\ff}$ and we found that $\norm(C_{\ff})$ is only supported on $\{3,5,7 \}$. As $p$ is assumed to be greater or equal to $17$, this is a contradiction and the proof of our theorem is now complete.

\section{Serre's Uniformity Conjecture and Asymptotic Fermat}

For a number field $K$, the \textbf{Asymptotic Fermat's Last Theorem} over $K$ is the statement that there exists a bound $B_K$ such that for all primes $p>B_K$, the Fermat equation $a^p+b^p+c^p=0$ does not have solutions in $a,b,c \in K \setminus \{0 \}$. 

Let $\omega \in \Q(\sqrt{-3})$ be a primitive cube root of unity. For every $p \geq 5$, we have $1^p + \omega^p+ \omega^{2p}=0$, hence the Asymptotic Fermat's Last Theorem does not hold over $\Q(\sqrt{-3})$. The authors of \cite{FrSiKr} point out that it is reasonable to make the following conjecture, a consequence of the $abc$-conjecture for number fields (see \cite{Bro06}).

\begin{conjecture} \label{AFLT}
	Let $K$ be a number field such that $\omega \notin K$. Then the Asymptotic Fermat's Last Theorem holds over $K$.
\end{conjecture}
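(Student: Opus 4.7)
The plan is to derive Conjecture \ref{AFLT} from the abc-conjecture for number fields, adapting the classical abc-implies-Fermat argument familiar over $\Q$. Given a non-trivial solution, view it as a projective point $[a:b:c] \in \PP^2(K)$; after clearing denominators and common ideal factors, one can assume $a,b,c \in \cO_K$ with ideal $(a,b,c) = \cO_K$. The abc-inequality over $K$, applied to the triple $(a^p, b^p, -c^p)$ whose entries sum to zero by hypothesis, gives, for every $\epsilon > 0$,
\[
H_K([a:b:c])^{p} = H_K([a^p:b^p:c^p]) \leq C_\epsilon \cdot N_{K/\Q}\bigl(\mathrm{rad}(a^p b^p c^p)\bigr)^{1+\epsilon} = C_\epsilon \cdot N_{K/\Q}\bigl(\mathrm{rad}(abc)\bigr)^{1+\epsilon},
\]
because the radical ideal is insensitive to exponents. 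Combining with the standard bound $N_{K/\Q}(\mathrm{rad}(abc)) \leq |N_{K/\Q}(abc)| \leq H_K([a:b:c])^{3}$ (which uses coprimality of $a,b,c$ and the usual comparison between integer norms and projective heights) produces
\[
H_K([a:b:c])^{p - 3(1+\epsilon)} \leq C_\epsilon.
\]
For $p$ exceeding an explicit constant depending only on $K$ and $\epsilon$, this forces $H_K([a:b:c])$ to lie in an interval $[1, 1+\eta]$ with $\eta$ arbitrarily small.

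Since the projective heights realised by points of $\PP^2(K)$ form a discrete subset of $[1,\infty)$ near $1$ (by Northcott's theorem), we may take $p$ large enough to force $H_K([a:b:c]) = 1$, equivalently $a,b,c \in \cO_K^{*}$. Dividing by $c^p$ reduces the problem to the unit equation
\[
u^p + v^p + 1 = 0, \qquad u, v \in \cO_K^{*}.
\]
By Evertse's theorem, $X + Y + 1 = 0$ has only finitely many solutions with $X, Y \in \cO_K^{*}$; writing each such $X = \zeta_X \prod_i \epsilon_i^{a_i}$ relative to a torsion generator and a basis $\epsilon_1, \dots, \epsilon_r$ of $\cO_K^{*}/\mu_K$, and similarly for $Y$, the requirement that $X = u^p$ and $Y = v^p$ be $p$-th powers of units forces $p \mid a_i$ and $p \mid b_i$ for every $i$. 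For $p$ coprime to $|\mu_K|$ and larger than the maximum of the absolute values $|a_i|, |b_i|$ appearing across the finite list of solutions, both $X$ and $Y$ must be roots of unity. The classical determination of three-term vanishing sums of roots of unity (via $|1 + \zeta| = 1$ forcing $\zeta \in \{\omega, \omega^2\}$) then shows that $1 + \zeta_X + \zeta_Y = 0$ with $\zeta_X, \zeta_Y \in \mu_K$ yields $\{\zeta_X, \zeta_Y\} = \{\omega, \omega^2\}$, contradicting $\omega \notin K$.

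The principal obstacle is of course that the abc-conjecture over number fields is itself a deep open problem, so the argument above is only as unconditional as abc. A complementary route, closer to the spirit of this paper, would attach Frey curves to putative solutions and invoke Conjecture \ref{serreconj} in tandem with an appropriate form of Serre's Uniformity Conjecture over $K$, giving a conditional version of Conjecture \ref{AFLT} as hinted at in Section \ref{Sec1}; that approach demands uniform control over absolute irreducibility and surjectivity of $\rhobar_{E,p}$ for $p$ large, as well as a sufficient supply of Bianchi eigenforms at the relevant levels, extending the techniques of Theorem \ref{2inert} well beyond the class-number-one imaginary quadratic setting.
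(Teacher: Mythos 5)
The statement you are addressing is Conjecture \ref{AFLT}, which the paper does not prove: it is recorded as a conjecture, accompanied only by the remark (following \cite{FrSiKr}) that it is a consequence of the $abc$-conjecture for number fields, with a citation of \cite{Bro06}. Your proposal essentially fills in that remark, and the derivation is the standard one and is sound conditional on $abc$: the height inequality forces, for $p$ large in terms of $K$ and $\epsilon$, the projective height of $[a:b:c]$ down to $1$ (Northcott supplying the gap above height $1$), so that $a,b,c$ generate the same ideal; the resulting unit equation has finitely many solutions, the $p$-th power condition annihilates the free part of the unit group once $p$ exceeds the exponents occurring in that finite list, and a vanishing sum of three roots of unity forces $\omega \in K$, contradicting the hypothesis. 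Two points deserve care. First, for a general number field one cannot always scale a projective point to integral coordinates with $(a,b,c)=\mathcal{O}_K$; you should either invoke the scaling-invariant formulation of $abc$ for points on the line $x+y+z=0$, or fix ideal class representatives and absorb the bounded gcd ideal into the constants --- the conclusion that $(a)=(b)=(c)$ when the height equals $1$, hence that $a/c$ and $b/c$ are units, survives either way. Second, since the $abc$-conjecture over number fields is itself open, your argument does not establish Conjecture \ref{AFLT} unconditionally; it re-derives the implication the paper already asserts, which you acknowledge explicitly. Your closing remark about the Frey-curve alternative is consistent with what the paper actually carries out in Section 5 for the six imaginary quadratic fields of class number one, conditionally on Conjectures \ref{serreconj} and \ref{serreuni}.
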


Theorem \ref{2inert} proved in the previous section can be a little bit unsatisfying, since it only rules out the possible existence of coprime integral solutions $(a,b,c)$ such that $2 \mid abc$. Let $K=\Q(\sqrt{-d})$, where $d=3$, $11$, $19$, $43$, $67$ or $163$, as in the hypothesis of the aforementioned theorem. If $a,b,c \in \mathcal O_K$ is a non-trivial solution to the Fermat equation with prime exponent such that $2 \nmid abc$, then it is explained in Lemmas 4.1 and 4.2 of \cite{AFreitasSiksek} that $E$ has potentially good reduction at $\mathfrak q = 2 \mathcal O_K$ and, after possibly permuting $(a,b,c)$, the Serre conductor of $\rhobar_{E,p}$ is equal to $\mathfrak q^4$.

Assuming Serre's modularity conjecture, we would like to have a full resolution of the Fermat equation
$a^p+b^p+c^p=0$, where  $a,b,c \in K$ and  $p \geq 17$ prime.
Using our approach, this would follow if we could prove that the mod $p$ representation attached to the usual Frey curve is absolutely irreducible. Unfortunately, this is not true. We saw that when $K=\Q(\sqrt{-3})$, the triple formed from the third roots of unity is a solution to the Fermat equation for every prime $p \geq 5$. The Frey curve $E := E_{1, \omega, \omega^2}$ is, for every such $p$, a twist of the CM curve with LMFDB \cite{lmfdb} label 256.1-CMb1. The representation $\rhobar_{E,p}$ of this curve is never absolutely irreducible. To be precise, for $p \geq 5$, the image $\rhobar_{E,p}(G_K)$ is contained in a split Cartan subgroup if $\left( \frac{-3}{p} \right)=1$, respectively in a non-split Cartan subgroup if $\left( \frac{-3}{p}\right)=-1$. The former is reducible whereas the latter is irreducible but absolutely reducible.

To emphasize that a resolution of Fermat equation with prime exponent over the fields $K$ considered above is a task worth pursuing, we will show that such a resolution is possible if we assume a folklore conjecture (see \cite{Bour18}) motivated by a question of Serre.

\begin{conjecture}[Uniformity conjecture] \label{serreuni} Fix a number field $K$. There exists a constant $C(K)$ such that for all non-CM elliptic curves $E/K$ and all primes $p \geq C(K)$, the mod $p$ Galois representation $\rhobar_{E,p}: G_K \to \GL_2(\F_p)$ is surjective.
\end{conjecture}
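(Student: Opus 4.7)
The plan is to apply the classification of maximal subgroups of $\GL_2(\F_p)$ (Dickson's theorem, see \cite{serre72}*{Proposition 15}) and split the analysis into four cases. If $\rhobar_{E,p}$ is not surjective then, enlarging $p$ beyond a small absolute constant so that $\SL_2(\F_p)$ is simple and $\det \rhobar_{E,p} = \chi_p$ is surjective onto $\F_p^*$, its image is contained in one of: (a) a Borel subgroup, (b) the normalizer $N_{\mathrm{sp}}$ of a split Cartan, (c) the normalizer $N_{\mathrm{ns}}$ of a non-split Cartan, or (d) an exceptional subgroup whose projective image is $A_4$, $S_4$ or $A_5$. The task is then to rule out each case for all non-CM $E/K$ and all $p$ larger than a constant depending only on $K$.

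First I would dispose of case (d): the projective image has order at most $60$, hence the image of $\rhobar_{E,p}$ in $\PGL_2(\F_p)$ is of bounded order, whereas its determinant $\chi_p$ has image of order $p-1$ in $\F_p^*$. These are incompatible once $p$ exceeds a small absolute constant, independent of $K$.

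Next I would treat the Borel case (a). A reducible $\rhobar_{E,p}$ endows $E$ with a $K$-rational cyclic $p$-isogeny, hence a non-cuspidal non-CM $K$-point on the modular curve $X_0(p)$. For a fixed number field $K$, combining Merel's uniform boundedness theorem for torsion with the isogeny theorems of Momose, David, Larson--Vaintrob and their successors would produce an explicit constant $C_1(K)$ beyond which no such point exists, so (a) is ruled out. The nontrivial input here is a good bound on the height of the $j$-invariant of any such $E$, exploiting integral models at primes of good reduction.

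The hard part will be the two Cartan normalizer cases (b) and (c), which correspond to producing $K$-rational points on the modular curves $X_{\mathrm{sp}}^{+}(p)$ and $X_{\mathrm{ns}}^{+}(p)$ respectively. Over $\Q$ the split case was settled by Bilu--Parent via the Runge method, and the non-split case was resolved only recently by Balakrishnan--Dogra--M\"uller--Tuitman--Vonk through quadratic Chabauty; neither argument is currently known to extend uniformly to an arbitrary number field. I expect that a genuinely new proof of Conjecture \ref{serreuni} in full generality will require a novel uniform method of bounding $K$-points on these modular curves as $p\to\infty$, and this is precisely the main obstacle --- the one that keeps the conjecture open outside of $K=\Q$ and a handful of special cases.
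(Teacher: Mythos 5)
The statement you were asked about is a \emph{conjecture}: the paper does not prove Conjecture \ref{serreuni}, it merely states it (citing \cite{Bour18}) and assumes it as a hypothesis in Section 5. So there is no ``paper's own proof'' to compare against, and your proposal --- which you yourself conclude by saying the conjecture remains open --- is a survey of the standard strategy rather than a proof. As a survey it is broadly accurate: the four-case decomposition via Dickson's classification is the right framework, the split Cartan case over $\Q$ is indeed Bilu--Parent, the non-split case over $\Q$ is Balakrishnan--Dogra--M\"uller--Tuitman--Vonk (for $p=13$; the general non-split case over $\Q$ is still not fully settled in the way you suggest), and the whole problem is open over general number fields.

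Two of your intermediate claims are, however, wrong or overstated. First, your disposal of case (d) does not work: a bounded projective image is compatible with a surjective determinant, because the image may contain a large group of scalars $\lambda I$, each contributing $\lambda^2$ to the determinant; the subgroup $(\F_p^*)^2$ has index $2$, so no contradiction with $\det\rhobar_{E,p}=\chi_p$ being onto arises from counting alone. The correct argument (Serre, 1972) is local at $p$: the image of (tame) inertia at a prime above $p$ forces the projective image to contain a cyclic subgroup of order growing with $p$, which eventually exceeds $5$, the maximal element order in $A_4$, $S_4$, $A_5$. Second, in case (a) you invoke Merel, but Merel's theorem bounds \emph{torsion}, not cyclic $p$-isogenies; over a general number field $K$ the uniform isogeny bound you need is not known unconditionally --- Momose's and Larson--Vaintrob's results require GRH (or restrict the class of fields) --- so even the Borel case cannot currently be closed with an unconditional ``explicit constant $C_1(K)$'' for arbitrary $K$. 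None of this affects the paper, which treats Conjecture \ref{serreuni} purely as a hypothesis.
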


Let $K$ be one of the six quadratic imaginary fields listed above. Assume that Serre's modularity conjecture and the Uniformity conjecture (Conjectures \ref{serreconj} and \ref{serreuni}) hold for $K$. As there are elliptic curves with $17$-isogenies defined over $K$ (there are such elliptic curves defined over $\Q$), we know that $C(K) > 17$. 

Suppose $(a,b,c) \in K^3$ is a non-trivial solution to the Fermat equation with prime exponent $p \geq C(K)$,
\begin{equation} \label{fermeqp}
a^p+b^p+c^p=0.
\end{equation}

As before, we can scale the solution of \eqref{fermeqp} such that $a,b,c \in \mathcal O_K \setminus \{ 0\}$ are coprime. From Theorem \ref{2inert} we know that $2 \nmid abc$. Let $$E := E_{a,b,c} : Y^2=X(X-a^p)(X+b^p)$$ be the usual Frey curve and denote by $\rhobar_{E,p}: G_K \to \GL_2(\F_p)$ the Galois representation on the $p$-torsion. The discussion in previous sections shows that $\rhobar_{E,p}$ is unramified away from the primes above $2$ and $p$, it is finite flat at every prime of $K$ that lies above $p$ and $\det(\rhobar_{E,p})$ is the mod $p$ cyclotomic character. Lemma 4.2 in \cite{AFreitasSiksek} proves that $E$ has potential good reduction at the prime ideal $2 \cO_K$.  Applying the Tate algorithm one can prove that, after possibly permuting $(a,b,c)$, the valuation of the conductor of $E$ at $2\cO_K$ is equal to $4$ (see \cite{FreitasSiksek}*{Lemma 4.1}).

Suppose that $E$ does not have CM. Our assumption of the Uniformity conjecture implies that $\rhobar_{E,p}$ is surjective, hence absolutely irreducible. Its Serre conductor divides $\cN=2^4\cO_K$. Thus, $\rhobar_{E,p}$ satisfies the hypothesis of Conjecture \ref{serreconj} and this predicts the existence of a mod $p$ eigenform $c \in H^{1}(Y_0(\mathcal N), \overline{\F}_p)$ such that for every prime ideal $(\pi) \subset \mathcal O_K$, coprime to $p \mathcal N$ we have
$$T_{\pi}(c)= \Tr(\rhobar_{E,p}(\Frob_{(\pi)})) \cdot c,$$
where $T_{\pi}$ is a Hecke operator. In the previous section we have shown that this is not possible for $p \geq C(K) > 17$. 

Suppose that $E$ has CM. Serre and Tate \cite{serretate} showed that elliptic curves with CM have everywhere potential good reduction, hence $j(E)$ is an algebraic integer. Recall that
$$j(E)=\frac{c_4(E)^3}{\Delta(E)}=2^{8} \cdot \frac{(b^{2p}-a^pc^p)^3}{(abc)^{2p}}.$$
Since $a$, $b$ and $c$ are integral and coprime, one can see from the usual formulas that $c_4(E)$ and $\Delta(E)$ are coprime outside the prime above $2$. The latter does not divide $abc$. This implies that $abc$ is a unit, since any odd prime dividing $abc$ would be a prime of potentially multiplicative reduction, contradicting the fact that $E$ has CM. Hence $a,b,c$ are units in $\mathcal O_K$.  There are not that many possible units over the six quadratic imaginary fields that are discussed and, by trying all of the possibilities, we obtain that the only solutions are permutations of $(1, \omega, \omega^2) \in \Q(\sqrt{-3})^3$, where $\omega$ is a non-trivial third root of unity. In addition to solutions $(a,b,c) \in K^3$ with $abc=0$ to the Fermat equation \eqref{fermeq}, let us call trivial those with the property that $a+b+c=0$. It can be proved that the latter are just scalar multiples of permutations of $(1,\omega, \omega^2)$. The next result follows by combining the above with Theorems \ref{main} and \ref{2inert}.

\begin{theorem} Let $K=\Q(\sqrt{-d})$ be a quadratic imaginary number field of class number $1$ and suppose that Conjectures \ref{serreconj} and \ref{serreuni} hold over $K$.  There is an absolute constant $C(K)>0$ such that the only solutions to the Fermat equation $a^p+b^p+c^p=0$ with $a,b,c \in K$ and $p>C(K)$ prime are trivial.	
\end{theorem}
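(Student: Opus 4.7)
The plan is to combine Theorems \ref{main} and \ref{2inert} with the Uniformity conjecture, distinguishing the Frey curve according to whether it has complex multiplication. For $K = \Q(\sqrt{-d})$ with $d \in \{1,2,7\}$ Theorem \ref{main} gives a stronger statement already, so I may assume $d \in \{3,11,19,43,67,163\}$. Using that $K$ has class number one, I scale any putative solution of $a^p+b^p+c^p=0$ to a coprime triple $(a,b,c)\in \cO_K^3$. Theorem \ref{2inert} handles the case $2\mid abc$ whenever $p\geq 19$, so from now on I may assume $2\nmid abc$. I choose $C(K)\geq\max(19,C_{\mathrm{Uni}}(K))$, where $C_{\mathrm{Uni}}(K)$ is the constant provided by Conjecture \ref{serreuni}.

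Form the Frey curve $E = E_{a,b,c}\colon Y^2 = X(X-a^p)(X+b^p)$. By Lemmas 4.1 and 4.2 of \cite{AFreitasSiksek}, $E$ has potentially good reduction at $\mathfrak q = 2\cO_K$ and, after permuting $(a,b,c)$ if necessary, $v_{\mathfrak q}(N_E)=4$. Outside $\mathfrak q$ the curve is semistable, $\rhobar_{E,p}$ is finite flat at each prime above $p$, and $\det(\rhobar_{E,p})=\chi_p$, so the Serre conductor of $\rhobar_{E,p}$ divides $\cN = \mathfrak q^4$. Now split into two cases.

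If $E$ does not have CM, Conjecture \ref{serreuni} forces $\rhobar_{E,p}$ to be surjective for $p\geq C(K)$, hence absolutely irreducible, and Conjecture \ref{serreconj} applies exactly as in Section \ref{proof2inert}. The vanishing of $p$-torsion in $H^2(Y_0(\cN),\Z_{(p)})$ for $p\geq 19$ (from Table \ref{primetorsion2}) together with the Ash-Stevens lifting produces a complex Bianchi newform $\ff$ of level dividing $\mathfrak q^4$ whose Hecke eigenvalues match those attached to $\rhobar_{E,p}$ modulo some prime $\mathfrak P\mid p$ of $\Q_{\ff}$. The trace-comparison bound $p\mid\norm(C_{S,\ff})$ already established in Section \ref{proof2inert}, whose prime support lies in $\{3,5,7\}$, then contradicts $p\geq 19$.

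If instead $E$ has CM, then by Serre-Tate \cite{serretate} $E$ has potential good reduction at every finite place, so $j(E) = 2^{8}(b^{2p}-a^pc^p)^3/(abc)^{2p}$ is an algebraic integer. Since $(a,b,c)$ are pairwise coprime and $2\nmid abc$, a direct inspection of the standard $c_4$ and $\Delta$ formulas shows that any odd prime dividing $abc$ is a prime of potentially multiplicative reduction of $E$, contradicting the CM hypothesis. Hence $a,b,c\in\cO_K^{\times}$. The unit group of each of the six fields equals $\{\pm 1\}$ except for $\Q(\sqrt{-3})$, whose units are the sixth roots of unity; a finite check of unit triples shows that the only solutions of $a^p+b^p+c^p=0$ with $p\geq 5$ are scalar multiples of permutations of $(1,\omega,\omega^2)$ in $\Q(\sqrt{-3})$, and these all satisfy $a+b+c=0$, so are trivial in the sense of the theorem. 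The main obstacle is the non-CM case, which depends both on the (conjectural) surjectivity input and on the level-lowering and eigenform-elimination bookkeeping already carried out for Theorem \ref{2inert}; the CM case reduces to the finite arithmetic of the unit groups.
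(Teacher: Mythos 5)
Your proposal is correct and follows essentially the same route as the paper: dispatch $d\in\{1,2,7\}$ via Theorem \ref{main}, reduce to $2\nmid abc$ via Theorem \ref{2inert}, and then split on whether the Frey curve has CM, using the Uniformity conjecture plus the Section \ref{proof2inert} eigenform elimination in the non-CM case and the Serre--Tate integrality of $j(E)$ plus the finiteness of the unit groups in the CM case. The only cosmetic difference is that you make the choice of $C(K)$ and the unit-group enumeration slightly more explicit than the paper does.
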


\bibliographystyle{amsplain}
\bibliography{perf-pow}

\end{document}